\documentclass[12pt]{amsart}
\usepackage{latexsym,amsmath,amssymb,epsfig,amsthm}
\usepackage{graphicx}
\usepackage{tikz}
\usetikzlibrary{calc}
\usepackage[enableskew,vcentermath]{youngtab}
\usepackage{hyperref}
\hypersetup{colorlinks=false}
\input epsf
\newtheorem{theorem}{Theorem}[section]
\newtheorem{proposition}[theorem]{Proposition}

\newtheorem{conjecture}[theorem]{Conjecture}

\newtheorem{lemma}[theorem]{Lemma}
\newtheorem{remark}[theorem]{Remark}

\newcommand{\des}{{\rm des}}

\newcommand{\Ehr}{{\rm Ehr}}

\newcommand{\exc}{{\rm exc}}

\newcommand{\unlucky}{{\rm unlucky}}

\newcommand{\dD}{{\mathcal D}}

\newcommand{\fF}{{\mathcal F}}

\newcommand{\qQ}{{\mathcal Q}}
\newcommand{\sS}{{\mathcal S}}
\newcommand{\wW}{{\mathcal W}}

\newcommand{\RR}{{\mathbb R}}

\newcommand{\NN}{{\mathbb N}}

\renewcommand{\to}{\rightarrow}

\newcommand{\sm}{{\smallsetminus}}
\begin{document}
\title[Lattice point enumeration of some arbor polytopes]
{Lattice point enumeration of some arbor polytopes}

\author{Christos~A.~Athanasiadis}
\address{Department of Mathematics\\
National and Kapodistrian University of Athens\\
Panepistimioupolis\\ 15784 Athens, Greece}
\email{caath@math.uoa.gr}

\author{Qiqi~Xiao}
\address{School of Mathematical Sciences\\
Dalian University of Technology\\
Dalian, Liaoning 116024, PR China}
\email{xiaoqqcs@hotmail.com}

\author{Xue~Yan}
\address{School of Mathematical Sciences\\
Qufu Normal University\\
Qufu 273165, PR China}
\email{yanxue@qfnu.edu.cn}

\date{March 12, 2026}
\thanks{ \textit{Mathematics Subject
Classifications}: 52B20, 05A15, 05E45, 26C10}
\thanks{ \textit{Key words and phrases}.
Lattice polytope, arbor polytope, Ehrhart polynomial,
$h^\ast$-polynomial, gamma-positivity,
real-rootedness.
}

\begin{abstract}
The $n$-dimensional lattice polytopes 
$\mathcal{Q}_{n,k}$ 
obtained by intersecting the $n$th dilate of the 
standard $n$-dimensional simplex in $\mathbb{R}^n$ 
with the half-spaces $x_i \le 1$ for $1 \le i \le k$ 
form an interesting special case of Chapoton's arbor 
polytopes. They interpolate between the $n$th 
dilate of the standard $n$-dimensional simplex and 
the standard $n$-dimensional cube in $\mathbb{R}^n$. 
This paper provides an 
explicit combinatorial interpretation of the 
$h^\ast$-polynomial of $\mathcal{Q}_{n,k}$, as 
the ascent enumerator of certain words, and partly 
confirms some of Chapoton's conjectures on the 
lattice point enumeration of arbor polytopes in 
this special case. More specifically, the Ehrhart
polynomial of $\mathcal{Q}_{n,k}$ is shown to be 
magic positive, by means of a new combinatorial 
parking model for cars, and the real-rootedness 
of its $h^\ast$-polynomial is deduced. The 
polynomial whose coefficients count the lattice 
points of $\mathcal{Q}_{n,k}$ by the number of 
their nonzero coordinates is shown to be 
gamma-positive and a combinatorial 
interpretation of the $h^\ast$-polynomial of 
any arbor polytope is conjectured. 
\end{abstract}

\maketitle

\section{Introduction}
\label{sec:intro}

Given a positive integer $n$ and $k \in \{0,
1,\dots,n\}$, we will be primarily interested in
the polytope $\qQ_{n,k}$ in $\RR^n$ defined by the
inequalities
\begin{equation} \label{eq:defQnk}
\begin{tabular}{l}
$x_i \ge 0$ \textrm{for} $1 \le i \le n$, \\
[1mm] $x_i \le 1$ \textrm{for} $1 \le i \le k$,
and \\ [1mm] $x_1 + x_2 + \cdots + x_n \le n$,
\end{tabular}
\end{equation}
where $x_1, x_2,\dots,x_n$ are the standard
coordinates of $\RR^n$. This $n$-dimensional 
lattice polytope specializes to the $n$th dilate 
of the standard $n$-dimensional simplex and to 
the standard $n$-dimensional cube in $\RR^n$ 
for $k=0$ and $k=n$, respectively.

The polytopes $\qQ_{n,k}$ belong to the class of
arbor polytopes, introduced and studied by 
Chapoton~\cite{Cha25+}. Let us recall their 
definition. An \emph{arbor} on the ground set $[n] 
:= \{1, 2,\dots,n\}$ is a rooted tree with vertices 
the blocks of a partition of $[n]$. We call $n$ 
the \emph{size} of the arbor. Given such an arbor 
$\tau$, the polytope $\qQ_\tau$ in $\RR^n$ is 
defined by the inequalities $x_i \ge 0$ for $i \in 
[n]$ and 
\[ \sum_{i \in \dD(v)} x_i \le |\dD(v)| \]
for all vertices $v$ of $\tau$, where $\dD(v)$ is
the union of all vertices which are descendants of
$v$ in $\tau$ (including $v$ itself). Thus, $x_1 
+ x_2 + \cdots + x_n \le n$ is the inequality 
which corresponds to the root of $\tau$. It was 
verified in \cite[Section~1]{Cha25+} that 
$\qQ_\tau$ is an $n$-dimensional lattice polytope 
(all vertices of $\qQ_\tau$ belong to $\NN^n$) 
which is simple (every vertex is an endpoint of 
exactly $n$ edges of $\qQ_\tau$).

Chapoton~\cite{Cha25+} formulated a number of 
intriguing conjectures on the lattice point 
enumeration of arbor polytopes. The first of 
these concerns the vector $h(\tau) = 
(h_i(\tau))_{0 \le i \le n}$, where $h_i(\tau)$ 
is the number of points in $\qQ_\tau \cap \NN^n$ 
which have exactly $i$ nonzero coordinates.
\begin{conjecture}
{\rm (\cite[Conjecture~0.1]{Cha25+})}
\label{conj:poly}
The vector $h(\tau)$ is equal to the $h$-vector of
an $n$-dimensional simplicial polytope for every
arbor $\tau$ of size $n$. In particular, $h(\tau)$
is palindromic and unimodal.
\end{conjecture}

The palindromicity of $h(\tau)$ (meaning, the 
statement that $h_i(\tau) = h_{n-i}(\tau)$ for 
$0 \le i \le n$) is by itself nontrivial. It 
would imply that the generating polynomial 
$h(\tau,t) = \sum_{i=0}^n h_i(\tau) t^i$ of 
$h(\tau)$ can be written uniquely in the form 
\begin{equation} \label{eq:tau-gamma}
h(\tau,t) = \sum_{i=0}^{\lfloor n/2 \rfloor}
\gamma_i(\tau) t^i (1+t)^{n-2i}
\end{equation}
for some integers $\gamma_0(\tau), \gamma_1
(\tau)\dots,\gamma_{\lfloor n/2 \rfloor}(\tau)$.
The nonnegativity of these numbers, known as 
\emph{$\gamma$-positivity} (see~\cite{Ath18} 
for a survey of this important notion), is 
stronger than the palindromicity and unimodality 
of $h(\tau,t)$. The $\gamma$-positivity of 
$h(\tau,t)$ was implicitly conjectured in 
\cite[Section~0.1]{Cha25+} as well. 

The second main conjecture of~\cite{Cha25+}
concerns the Ehrhart polynomials of arbor 
polytopes. We recall from 
\cite[Section~4.6]{StaEC1} (see also~\cite{FH24}
and references therein) that the \emph{Ehrhart 
polynomial} of a lattice polytope $\qQ \subset 
\RR_{\ge 0}^n$ is the unique polynomial 
$\Ehr(\qQ, t)$ with the property that 
$\Ehr(\qQ, m) = \# (m\qQ \cap \NN^n)$ for every 
$m \in \NN$. The \emph{$h^\ast$-polynomial} 
$h^\ast(\qQ, t)$ of $\qQ$ is defined by the 
equation
\begin{equation} \label{eq:hstar-def} 
\sum_{m \ge 0} \Ehr(\qQ, m) t^m = 
\frac{h^\ast(\qQ, t)}{(1-t)^{d+1}}, 
\end{equation}
where $d = \dim(\qQ)$. The $h^\ast$-polynomial 
always has nonnegative coefficients, a property 
which may fail for the Ehrhart polynomial; see,
for instance, \cite[Section~4]{FH24}. An old 
theorem of Brenti~\cite[Theorem~4.4.4]{Bre89} 
implies that if all roots of $\Ehr(\qQ, t)$ are 
real and lie in the interval $[-1,0]$, then all 
roots of $h^\ast(\qQ, t)$ are real as well. 
\begin{conjecture}
{\rm (\cite[Conjecture~2.2]{Cha25+})}
\label{conj:roots}
For every arbor $\tau$, all roots of the
Ehrhart polynomial of $\qQ_\tau$ are real and lie
in the interval $[-1,0]$. In particular, the
$h^\ast$-polynomial of $\qQ_\tau$ has only real
roots.
\end{conjecture}

The real-rootedness of $h^\ast(\qQ_\tau, t)$ is
implied by a weaker property of the Ehrhart 
polynomial. One may express this polynomial as
\begin{equation} \label{eq:magic-tau}
n! \, \Ehr(\qQ_\tau, t) = \sum_{i=0}^n c_i(\tau)
     t^i(1+t)^{n-i}
\end{equation}
for some integers $c_0(\tau),
c_1(\tau),\dots,c_n(\tau)$. Following~\cite{FH24}, 
we call $\Ehr(\qQ_\tau, t)$ \emph{magic positive}
if these integers are nonnegative. A theorem of
Br\"and\'en \cite[Theorem~4.2]{Bra06} implies in 
this situation that $h^\ast(\qQ_\tau, t)$ has 
only real roots. Thus, one may attempt to prove 
the real-rootedness of the $h^\ast$-polynomial of 
$\qQ_\tau$ by providing a combinatorial 
interpretation of these integers.

Conjecture~\ref{conj:poly} and the 
$\gamma$-positivity of $h(\tau,t)$ were 
verified in the special case of linear arbors 
in~\cite{Ath26}. The aim of this paper is to 
address these questions for the polytopes 
$\qQ_{n,k}$. This polytope is simply the arbor
polytope associated to the arbor $\tau_{n,k}$ 
which has the $(n-k)$-element set 
$\{k+1, k+2,\dots,n\}$ as its root and the 
singletons $\{i\}$ for $i \in [k]$ as the leaves 
of the root (one may call such an arbor an 
\emph{octopus}). Here are two of our main 
results for $\qQ_{n,k}$.
\begin{theorem} \label{thm:gamma}
The polynomial $h(\tau_{n,k},t)$ is
$\gamma$-positive and, in particular, palindromic
and unimodal, for all $n,k$.
\end{theorem}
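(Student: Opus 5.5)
The plan is to compute $h(\tau_{n,k},t)$ explicitly, rewrite it in the basis $t^i(1+t)^{n-i}$, and only then force it into the symmetric basis $t^i(1+t)^{n-2i}$ of (\ref{eq:tau-gamma}). Here $\qQ_{n,k}\cap\NN^n$ consists of the points with $x_1,\dots,x_k\in\{0,1\}$, $x_{k+1},\dots,x_n\ge 0$ and $\sum x_i\le n$.

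First I would count these points by nonzero coordinates. Suppose exactly $j$ of the first $k$ coordinates equal $1$, and exactly $r$ of the last $n-k$ coordinates are nonzero. The $r$ nonzero coordinates are positive integers with sum at most $n-j$, and there are $\binom{n-j}{r}$ such $r$-tuples. Hence
\[ h(\tau_{n,k},t)=\sum_{j=0}^{k}\sum_{r\ge 0}\binom{k}{j}\binom{n-k}{r}\binom{n-j}{r}\,t^{j+r}. \]
For $k=0$ this is $\sum_r\binom{n}{r}^2t^r$. Its classical $\gamma$-expansion is
\[ \sum_r\binom{n}{r}^2t^r=\sum_i\binom{n}{i}\binom{n-i}{i}t^i(1+t)^{n-2i}, \]
so the case $k=0$ serves as a sanity check and as a model for the general identity.

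Second, I would handle the inner sums $P_{m,N}(t)=\sum_r\binom{m}{r}\binom{N}{r}t^r$, where $m=n-k\le N=n-j$. Applying Vandermonde's identity $\binom{N}{r}=\sum_{s}\binom{N-m}{r-s}\binom{m}{s}$ and grouping terms, I would expand $P_{m,N}$ as a nonnegative combination of $t^a(1+t)^{m-a}$, possibly multiplied by extra powers of $t$ coming from the excess $N-m=k-j$. I would then substitute this back and combine it with the factor $\binom{k}{j}t^j$. The goal is to write $h(\tau_{n,k},t)$ as a sum of terms $t^{a}(1+t)^{b}$ of total degree $n$, each with an explicit binomial coefficient.

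Third, I would pair the contribution of $j$ with that of $k-j$. The symmetry $\binom{k}{j}=\binom{k}{k-j}$ suggests grouping the pieces into palindromic blocks centered at $n/2$, each of the form $t^i(1+t)^{n-2i}$ times a nonnegative integer. I would verify the resulting candidate formula for $\gamma_i(\tau_{n,k})$ on small cases, for instance $k=n$, where $h=(1+t)^n$ gives $\gamma_0=1$ and all other $\gamma_i=0$. I would then prove it by comparing generating functions in $k$. A useful tool here is the recursion from adding one leaf, i.e.\ passing from $k$ to $k+1$, which changes the polynomial by a $(1+t)$-factor type relation.

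The main obstacle I expect is this last step. The naive expansion yields terms $t^a(1+t)^b$ that are not individually centered at $n/2$, and cancellation or regrouping is needed to reach the symmetric basis. If the algebraic route stalls, I would try a combinatorial alternative: model the lattice points as words and define a Foata--Strehl-type group action (valley hopping) on them. The first $k$ positions act as free $0/1$ switches, and an action preserving the count of nonzero coordinates up to reflection $i\mapsto n-i$ would make each orbit contribute $t^i(1+t)^{n-2i}$.
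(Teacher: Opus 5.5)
\paragraph{There is a genuine gap.} Your lattice-point count is correct; term by term it is the count of Lemma~\ref{lem:h}, indexed by nonzero instead of zero coordinates. Your checks at $k=0$ and $k=n$ are also correct. But the proposal stops exactly where the theorem lives: you never determine the numbers $\gamma_i(\tau_{n,k})$, let alone prove them nonnegative.

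The mechanism you suggest for reaching the basis $t^i(1+t)^{n-2i}$ also fails. The symmetry $j\leftrightarrow k-j$ does not match the reflection $t^p\mapsto t^{n-p}$. For $n=3$, $k=2$ your $j=0,1,2$ contributions are $1+3t$, $2t+4t^2$ and $t^2+t^3$. The first and last are not mirror images of each other, and neither $1+3t+t^2+t^3$ nor $2t+4t^2$ is palindromic, although the total $1+5t+5t^2+t^3$ is.

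The remaining ingredients do not fill the gap either. A nonnegative expansion into terms $t^a(1+t)^b$ not centered at $n/2$ implies nothing about $\gamma$-positivity. The ``recursion from adding one leaf'' is never formulated, and the valley-hopping action is never defined. Even the Vandermonde split you do specify, $\binom{n-j}{r}=\sum_s\binom{n-k}{s}\binom{k-j}{r-s}$, leads after resumming to $[t^p]\,h(\tau_{n,k},t)=\sum_s\binom{n-k}{s}\binom{k}{p-s}\binom{n-k+p-s}{p}$. That is just the zero-coordinate count of Lemma~\ref{lem:h} for $[t^{n-p}]$, so it gives palindromicity but not yet the $\gamma$-expansion.

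\paragraph{The missing step.} What is needed is a direct evaluation of the coefficients. With $p=j+r$ your formula reads $[t^p]\,h(\tau_{n,k},t)=\sum_r\binom{n-k}{r}\binom{k}{p-r}\binom{n-p+r}{r}$. Then:
\begin{enumerate}
\item Expand $\binom{n-p+r}{r}=\sum_s\binom{r}{s}\binom{n-p}{s}$.
\item Rewrite $\binom{n-k}{r}\binom{r}{s}=\binom{n-k}{s}\binom{n-k-s}{r-s}$.
\item Sum over $r$ by Vandermonde to get $\binom{n-s}{p-s}$.
\item Use $\binom{n-p}{s}\binom{n-s}{p-s}=\binom{n-s}{s}\binom{n-2s}{p-s}$.
\end{enumerate}
This gives
\[ h(\tau_{n,k},t)=\sum_{s=0}^{\lfloor n/2\rfloor}\binom{n-k}{s}\binom{n-s}{s}\,t^s(1+t)^{n-2s}, \]
which is Proposition~\ref{prop:gamma} and yields the theorem immediately. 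The paper performs the same chain of Vandermonde manipulations on the zero-coordinate count $t^n h(\tau_{n,k},1/t)$. No pairing, recursion in $k$, or group action is needed.
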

\begin{theorem} \label{thm:magic}
We have
\begin{equation} \label{eq:magic-nk}
n! \, \Ehr(\qQ_{n,k}, t) = \sum_{i=0}^n c_i t^i
(1+t)^{n-i}
\end{equation}
for some nonnegative integers $c_0,
c_1,\dots,c_n$. In particular, the
$h^\ast$-polynomial of $\qQ_{n,k}$ has only real
roots.
\end{theorem}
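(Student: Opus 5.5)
The plan is to prove the identity \eqref{eq:magic-nk} for every $t=m\in\NN$ by a combinatorial model. It suffices to show that $n!\,\#(m\qQ_{n,k}\cap\NN^n)$ counts a set of objects $\sS_m$ which splits into classes indexed by $i\in\{0,1,\dots,n\}$. In the class indexed by $i$, an object should be a choice among $c_i$ ``shapes'', together with $i$ independent entries from an $m$-element set and $n-i$ independent entries from an $(m+1)$-element set. Both sides of \eqref{eq:magic-nk} are polynomials in $t$, so this identity at all $m\in\NN$ suffices. Once the $c_i$ are shown to be nonnegative, the real-rootedness of $h^\ast(\qQ_{n,k},t)$ follows directly from Br\"and\'en's theorem \cite[Theorem~4.2]{Bra06}, as recalled in the introduction.

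First I record the guiding cases. The points of $m\qQ_{n,k}\cap\NN^n$ are the $x\in\NN^n$ with $x_i\le m$ for $i\le k$ and $x_1+\cdots+x_n\le nm$. For $k=n$ we get $n!(m+1)^n$, so $c_0=n!$ and all other $c_i$ vanish. For $k=0$ we get $n!\binom{nm+n}{n}=\prod_{j=1}^n(nm+j)$. Writing $nm+j=j(m+1)+(n-j)m$ and expanding the product gives magic positivity, and this is exactly Pollak's circular parking argument. In that argument one places $n$ cars with preferences on a circular street with $n(m+1)$ spots, of which $n$ are distinguished, and counts preference sequences up to rotation.

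The main step, and the expected obstacle, is a parking model interpolating between these two cases. I would try the following. Cars $1,\dots,k$ are ``restricted'' and may only prefer spots within a window of size $m+1$ attached to their own block. Cars $k+1,\dots,n$ are unrestricted, as in Pollak's model. One would then build a bijection between $m\qQ_{n,k}\cap\NN^n$, weighted by $n!$ (equivalently, labelled by a permutation recording the order in which the cars park), and preference data of the product form. At each successive car $j$, the number of admissible choices should be $m$ or $m+1$ times a factor that depends only on earlier choices, and the $c_i$ should then count ``outcome patterns'' (for instance orderings together with which cars land on distinguished spots). The delicate point is showing that the restricted cars never destroy the rotational symmetry used in Pollak's argument. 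If that fails, I would instead aim to count directly lattice points $x$ together with a linear ordering of $[n]$. In this count, each coordinate is chosen in turn from a range of size $m$ or $m+1$ depending on a descent-type comparison with the previous choice.

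As a fallback, I would use an algebraic recursion in $k$. Imposing $x_k\le m$ on $\qQ_{n,k-1}$ removes the points with $x_k\ge m+1$. Alternatively, one can condition on the value $x_k=a\in\{0,\dots,m\}$, which gives
\[
\Ehr(\qQ_{n,k},m)=\sum_{a=0}^{m} E_{n-1,k-1}(nm-a),
\]
where $E_{n-1,k-1}(N)$ counts points of $\NN^{n-1}$ with the first $k-1$ coordinates at most $m$ and coordinate sum at most $N$. I would then track the expansion of such sums in the basis $t^i(1+t)^{n-i}$, using the fact that summation operators of this kind preserve nonnegativity in a suitable ``magic'' basis with two parameters $(m, N-m\cdot\text{size})$. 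I would check this for small $n$ to guess explicit formulas for the $c_i$.
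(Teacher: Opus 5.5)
There is a genuine gap. Several parts are sound:
\begin{itemize}
\item Proving the identity for all $m\in\NN$ with $m$-independent $c_i$ suffices.
\item The extreme cases are right: for $k=n$ one has $c_0=n!$ and all other $c_i=0$; for $k=0$ each factor $nm+j=j(m+1)+(n-j)m$ is a nonnegative combination of $m$ and $m+1$.
\item The final appeal to Br\"and\'en's theorem is exactly what the paper does.
\end{itemize}
But for $0<k<n$ you never show that the $c_i$ are nonnegative, which is the whole content of the theorem.

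The ``restricted cars'' model is only something you would try. You give no parking rule, no bijection with $n!$ copies of $m\qQ_{n,k}\cap\NN^n$, and no description of the $c_i$. The obstruction you flag is real: the bounds $x_i\le m$ for $i\le k$ single out specific coordinates and break the cyclic symmetry behind Pollak's argument.

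The fallback does not close the gap either. Passing from $\qQ_{n,k-1}$ to $\qQ_{n,k}$ by removing the points with $x_k\ge m+1$ is a subtraction, which is exactly where positivity is at stake. The conditioning identity $\Ehr(\qQ_{n,k},m)=\sum_{a=0}^{m}E_{n-1,k-1}(nm-a)$ is correct. However, it is followed only by an unstated ``fact'' that such summation operators preserve nonnegativity in some two-parameter magic basis. That cannot be taken for granted, because magic positivity is fragile when the parameters decouple. For example,
\[
3!\,\Ehr(\qQ_{3,2,0},t)=(2t+1)(2t+2)(2t+3)=6(1+t)^3+4t(1+t)^2-2t^2(1+t).
\]

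What is missing is a device that removes the inclusion--exclusion signs. Inclusion--exclusion on the constraints $x_i\le m$ gives
\[
n!\,\Ehr(\qQ_{n,k},t)=\sum_{j=0}^k(-1)^j\binom{k}{j}\prod_{i=0}^{n-1}\bigl(it+(n-j-i)(1+t)\bigr),
\]
so $\sum_i c_it^i=f_{n,k}(t):=\sum_{j=0}^k(-1)^j\binom{k}{j}\prod_{i=0}^{n-1}(it+n-j-i)$.

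The key observation is that $f_{n,k}(1+t)=\sum_{j}(-1)^j\binom{k}{j}\prod_{i=0}^{n-1}(it+n-j)$ is itself an inclusion--exclusion count. It enumerates maps $\varphi:[n]\to[2n-1]$ with $\varphi(i)\le n+i-1$ whose image contains $[k]$, with $t$ marking values larger than $n$.

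Now fix the positions $s_1<\dots<s_k$ of the first occurrences of $1,\dots,k$, taken in this order at the cost of a factor $k!$. The count becomes the product $\prod_{j\notin S}\bigl(n-k+i+(j-1)t\bigr)$, where $s_i<j<s_{i+1}$. Substituting $t\mapsto t-1$ gives $\prod_{j\notin S}\bigl(n-k+i-j+1+(j-1)t\bigr)$. Each factor has nonnegative coefficients, because only $j-1-i$ non-$S$ positions precede $j$, so $j-1-i\le n-k-1$.

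Summing over $S$ then gives $c_i\ge 0$. The paper interprets this as the distribution of unlucky cars over the words in $\wW_{n,k}$, under a parking rule where a displaced car takes the largest free space. Without this step, or some other sign-free formula for the $c_i$, your proposal is a plan rather than a proof.
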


Our third main result for $\qQ_{n,k}$ is an 
explicit combinatorial interpretation of its 
$h^\ast$-polynomial. The sum of the coefficients
of this polynomial is known to equal the 
normalized volume of $\qQ_{n,k}$ (defined as the
volume of $\qQ_{n,k}$ multiplied by $n!$). One 
can easily verify (for example, as a 
consequence of Equation~(\ref{eq:Ehr-nk}) in 
Section~\ref{sec:magic}) that the normalized 
volume of $\qQ_{n,k}$ is equal to the number of
maps $f: [n] \to [n]$ which contain the set $[k]$ 
in their image (this is a reason to expect apriori
that $\qQ_{n,k}$ may have interesting lattice point 
enumerative properties). The following statement 
is a natural refinement of this observation. We
recall that $\des(w)$ denotes the number of
descents of a word $w = (w_1, w_2,\dots,w_n) \in 
[n]^n$, meaning indices $i \in [n-1]$ such that 
$w_i > w_{i+1}$.
\begin{theorem} \label{thm:h-star}
We have
\begin{equation} \label{eq:h-star}
h^\ast(\qQ_{n,k}, t) = \sum_{w \in \wW_{n,k}}
       t^{n-1-\des(w)},
\end{equation}
where $\wW_{n,k}$ is the set of words $w \in
[n]^n$ in which each of $1, 2,\dots,k$ appears
at least once. 

In particular, the right-hand 
side of~(\ref{eq:h-star}) has only real roots 
for all $n,k$.
\end{theorem}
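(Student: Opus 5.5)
Our plan is to compute $\Ehr(\qQ_{n,k},m)$ explicitly, turn it into a generating function, and then match the resulting $h^\ast$-polynomial with a descent enumerator of words.

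\emph{Step 1: the Ehrhart polynomial.} A lattice point of $m\qQ_{n,k}$ has $x_i \in \{0,1,\dots,m\}$ for $i \le k$, arbitrary $x_j \ge 0$ for $j>k$, and $\sum x_i \le mn$. Introducing a slack variable $x_{n+1} \ge 0$ turns the last condition into $\sum_{i=1}^{n+1} x_i = mn$. We then apply inclusion--exclusion on the set $S \subseteq [k]$ of coordinates violating $x_i \le m$, substituting $x_i \mapsto x_i + m + 1$. This gives
\[
\Ehr(\qQ_{n,k},m) = \sum_{j=0}^{k} (-1)^j \binom{k}{j} \binom{mn - j(m+1) + n}{n}.
\]
We will check that this formula is valid as a polynomial identity in $m$, which we expect is the content of Equation~(\ref{eq:Ehr-nk}).

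\emph{Step 2: the word model.} Here we use the standard fact that for a fixed word $w \in [n]^n$ the number of $m$ for which the relevant count equals $\binom{\cdot}{n}$ is governed by descents. Concretely, the Worpitzky-type identity gives
\[
\sum_{m\ge0}\binom{mn-j(m+1)+n}{n}t^m=\frac{\sum_{w}t^{(\cdot)}}{(1-t)^{n+1}},
\]
where $w$ ranges over words avoiding a prescribed $j$-element set of letters. The cleanest route is a direct bijection. The lattice points of $m\qQ$ with $x_{n+1}$ slack correspond to multisets, i.e.\ to weakly increasing sequences $0 \le a_1 \le \dots \le a_n \le mn$ together with data encoding a word. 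We would rather use the known identity
\[
\sum_{m \ge 0} \binom{(m+1)n - 1 - \ell}{n} t^m \cdot (1-t)^{n+1},
\]
which counts words with letters from an alphabet of size $n-\ell$ by $t^{\mathrm{asc}}$. More precisely, $\sum_{m\ge 0}(\#\{f:[n]\to[n]\text{ with }\dots\})$ can be handled via Stanley's theory of $P$-partitions: $\binom{mn+n-r}{n}$ is the number of pairs (word $w\in[n-r]^n$ of some type, compatible $m$-bounded function). So the plan is to prove, for each fixed subset $T\subseteq[k]$ of forbidden letters with $|T|=j$, that
\[
\sum_{m\ge0}\binom{mn-j(m+1)+n}{n}t^m=\frac{\sum_{w\in([n]\setminus T)^n}t^{\,n-1-\des(w)}}{(1-t)^{n+1}}.
\]
This is a statement in which the descent statistic is taken with respect to the original order on $[n]$ with the letters of $T$ deleted. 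We would prove it by the usual barred-word argument: encode $m$ by placing $m$ bars into the sequence, so that between consecutive bars the letters are weakly increasing. Equivalently, a word together with $m$ compatible bars corresponds to a lattice point in the $m$-th dilate. Accounting for the gaps created by the missing letters $T$ is exactly what produces the shift $-j(m+1)$.

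\emph{Step 3: combining.} Inclusion--exclusion on $T \subseteq [k]$ then gives exactly the words in which every letter of $[k]$ appears, which yields~(\ref{eq:h-star}). The statement about real roots then follows from Theorem~\ref{thm:magic}.

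\emph{Main obstacle.} The delicate point is the identity in Step 2. The descent statistic on words over $[n]\setminus T$ must be measured so that it is independent of which $j$ letters are removed, which holds since only relative order matters, and it must produce exactly $\binom{mn-j(m+1)+n}{n}$. We would first try to verify it via the $P$-partition formula
\[
\sum_{m}\binom{m(n-j)+\dots}{\dots}t^m.
\]
If that does not work cleanly, we would fall back on a direct computation of the $h^\ast$-polynomial from Step 1 and compare coefficients with a recurrence for the descent enumerator of $\wW_{n,k}$ in $k$, obtained by conditioning on whether the letter $k$ appears.
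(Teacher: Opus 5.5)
\textbf{Verdict: genuine gap.} Your Steps 1 and 3 are sound, and the identity you isolate is true, but it carries the whole theorem and you never prove it. Step 1 is Equation~(\ref{eq:Ehr-nk}), since $mn-j(m+1)+n=(n-j)(m+1)$. The inclusion--exclusion over $T\subseteq[k]$ at the level of words is legitimate because $\des(w)$ depends only on the relative order of letters. The real-rootedness claim does follow from Theorem~\ref{thm:magic}. The identity in question is
\[ \sum_{m\ge0}\binom{(n-j)(m+1)}{n}t^m=\frac{\sum_{w\in([n]\sm T)^n}t^{\,n-1-\des(w)}}{(1-t)^{n+1}}. \]
It is the only place where descents meet the Ehrhart polynomial, and none of the justifications you offer for it works as written.
\begin{itemize}
\item The displayed ``known identity'' uses $\binom{(m+1)n-1-\ell}{n}$, which is not $\binom{(n-\ell)(m+1)}{n}$, and the display is not an identity.
\item The $P$-partition display is a placeholder.
\item The barred-word argument with \emph{weakly} increasing compartments counts multisets, not sets. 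A word $w$ then admits $\binom{m-\des(w)+n}{n}$ bar placements, and summing over $w\in[N]^n$ gives $\binom{N(m+1)+n-1}{n}$ with statistic $\des(w)$. That is MacMahon's formula, not the reciprocal version you need.
\end{itemize}

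\textbf{How to close the gap.} Count $n$-element subsets of the grid $[m+1]\times([n]\sm T)$. It has $(n-j)(m+1)$ elements, which is the precise meaning of ``the gaps created by $T$.'' Order the grid by block index, and within a block by \emph{strictly decreasing} letter. A subset $z_1<\cdots<z_n$ with $z_i=(b_i,c_i)$ then corresponds to a word $w=(c_1,\dots,c_n)$ together with $1\le b_1\le\cdots\le b_n\le m+1$. The $b_i$ must increase strictly at each of the $n-1-\des(w)$ positions where $c_i\le c_{i+1}$. There are $\binom{m+1+\des(w)}{n}$ such sequences, and
\[ \sum_{m\ge0}\binom{m+1+\des(w)}{n}t^m=\frac{t^{\,n-1-\des(w)}}{(1-t)^{n+1}}, \]
which gives your identity. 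Your proof then goes through.

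\textbf{Comparison with the paper.} The paper instead writes the whole alternating sum as $[t^n](1+t)^{(n-k)(m+1)}\bigl((1+t)^{m+1}-1\bigr)^k$. This becomes a sum of $\prod_i\binom{m+1}{a_i}$ over weak compositions with $a_1,\dots,a_k\ge1$. The paper then applies MacMahon's formula to each multiset, followed by reciprocity $m\mapsto-m$. That route absorbs your inclusion--exclusion into the positivity condition $a_1,\dots,a_k\ge1$, so no separate sign cancellation over $T$ is needed.
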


Theorem~\ref{thm:gamma} is proven in 
Section~\ref{sec:gamma} by means of an explicit 
combinatorial formula (see 
Proposition~\ref{prop:gamma}) for the 
coefficients $\gamma_i(\tau_{n,k})$ of 
Equation~(\ref{eq:tau-gamma}). 
Theorems~\ref{thm:magic} and~\ref{thm:h-star} 
are proven in Sections~\ref{sec:magic} 
and~\ref{sec:h-star}, respectively. To prove 
Theorem~\ref{thm:magic}, a combinatorial 
interpretation of the coefficients $c_i$ of 
Equation~(\ref{eq:magic-nk}) is provided in 
terms of a new parking model for cars. The 
motivation behind this proof comes 
from~\cite{AFM25}, where a similar result is 
proven for Pitman-Stanley 
polytopes with positive integer parameters. 
Some generalizations, including a conjectural 
generalization of Theorem~\ref{thm:h-star} to 
all arbor polytopes, are discussed in 
Section~\ref{sec:gen}.

Throughout this paper, we use the notation $\NN 
= \{0, 1, 2,\dots\}$ and $[n] = \{1, 2,\dots,n\}$ 
for $n \in \NN$ and denote by $|S|$ the 
cardinality of a finite set $S$. Background on 
Ehrhart theory can be found in~\cite{BR15, FH24,
HiAC} \cite[Chapter~4]{StaEC1}. 

\section{Gamma-positivity}
\label{sec:gamma}

This section proves Theorem~\ref{thm:gamma}. 
That result is a consequence of the explicit 
formula of Proposition~\ref{prop:gamma}. The 
proof is rather computational; we leave the task
to find a bijective proof as an interesting 
open problem. 

The following lemma is a direct consequence 
of the definition of $h(\tau_{n,k},t)$.
\begin{lemma} \label{lem:h}
For every positive integer $n$ and every $k 
\in \{0, 1,\dots,n\}$,
\[ t^n h(\tau_{n,k},1/t) = \sum_{j=0}^n 
   \sum_{i=0}^{n-k} {n-k \choose i}
	 {k \choose j-i}{n-k+j-i \choose j} t^j. \]
\end{lemma}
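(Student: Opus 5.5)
The plan is to count directly: classify the lattice points of $\qQ_{n,k}$ by which coordinates vanish, and count the points with a prescribed zero pattern by a stars-and-bars argument. First I record what the left-hand side means. Since $h(\tau_{n,k},t)=\sum_{m} h_m(\tau_{n,k}) t^m$, we have $t^n h(\tau_{n,k},1/t)=\sum_{j=0}^n h_{n-j}(\tau_{n,k})\, t^j$. So the coefficient of $t^j$ is the number of points $x\in\qQ_{n,k}\cap\NN^n$ with exactly $n-j$ nonzero coordinates, equivalently exactly $j$ zero coordinates. By~(\ref{eq:defQnk}), these are the points $x\in\NN^n$ with $x_r\in\{0,1\}$ for $r\le k$ and $x_1+\cdots+x_n\le n$.

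Next, fix $j$ and split according to the number $i$ of zero coordinates among the last $n-k$ positions $k+1,\dots,n$. The remaining $j-i$ zeros lie among the first $k$ positions. The choice of zero positions contributes the factors ${n-k \choose i}{k \choose j-i}$. Once these positions are fixed, the first $k$ coordinates are completely determined: the $a:=k-(j-i)$ nonzero ones must equal $1$. The last block then has $b:=n-k-i$ coordinates, all positive integers, whose sum must be at most $n-a$.

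The remaining step is the standard count: the number of $b$-tuples of positive integers with sum at most $N$ equals ${N \choose b}$. One sees this by adding a nonnegative slack variable, or equivalently by identifying such tuples with $b$-subsets of $[N]$ via partial sums. This also holds for $b=0$, where the count is $1$. With $N=n-a=n-k+j-i$ we get
\[ {n-k+j-i \choose n-k-i} = {n-k+j-i \choose j}. \]

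Summing over $i$ then gives exactly the stated double sum. Out-of-range indices, namely $j-i<0$ or $j-i>k$, contribute $0$ through the binomial ${k \choose j-i}$. There is no real obstacle here. The only points needing care are the reindexing from nonzero coordinates to zero coordinates induced by $t^n h(\tau_{n,k},1/t)$, and the complementary binomial identity used in the last display.
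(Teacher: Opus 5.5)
Your proposal is correct and takes essentially the same approach as the paper. You split by the number $i$ of zeros among $x_{k+1},\dots,x_n$, choose the zero positions in ${n-k \choose i}{k \choose j-i}$ ways, force the other first-$k$ coordinates to equal $1$, and count the positive tail coordinates with sum at most $n-k+j-i$ as ${n-k+j-i \choose j}$. Your slack-variable justification of that last count, which the paper leaves implicit, is a welcome addition.
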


\begin{proof}
The lattice points in $\qQ_{n,k}$ are the vectors 
$x = (x_1, x_2,\dots,x_n) \in \NN^n$ which satisfy 
\eqref{eq:defQnk}. Given $j \in \{0, 1,\dots,n\}$,
we need to show that the number of these vectors 
which have exactly $j$ zero coordinates is equal 
to 
\[ \sum_{i=0}^{n-k}{n-k \choose i}{k \choose j-i}
   {n-k+j-i \choose j}. \]
To enumerate these elements, let $i$ be the 
number of zero coordinates among $x_{k+1},\dots,x_n$. 
There are ${n-k \choose i}$ ways to choose these 
coordinates and ${k \choose j-i}$ ways to choose 
the remaining zero coordinates of $x$. The $k-j+i$
nonzero coordinates among $x_1, x_2,\dots,x_k$ must 
all be equal to 1. The remaining $n-k-i$ nonzero 
coordinates of $x$ have sum no larger than $n-k+j
-i$. Thus, they can be chosen in ${n-k+j-i \choose 
j}$ ways and the proof follows. 
\end{proof}

\begin{proposition} \label{prop:gamma}
We have
\[ h(\tau_{n,k}, t) = 
   \sum_{i=0}^{\lfloor n/2 \rfloor}{n-k \choose i} 
	 {n-i \choose i} t^i(1+t)^{n-2i} \]
for all $n,k$. In particular, $h(\tau_{n,k}, t)$ is
$\gamma$-positive, and hence palindromic and unimodal.
\end{proposition}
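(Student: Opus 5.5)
The plan is to reduce the statement to a binomial coefficient identity and then prove that identity by coefficient extraction. Only the identity itself is a real computation.

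\textbf{Reduction to a coefficient identity.} Set $m = n-k$. The polynomial
\[ G(t) = \sum_{i} {m \choose i}{n-i \choose i} t^i(1+t)^{n-2i} \]
satisfies $t^n G(1/t) = G(t)$, since each summand $t^i(1+t)^{n-2i}$ does. So it suffices to prove $t^n h(\tau_{n,k},1/t) = G(t)$. Lemma~\ref{lem:h} gives the left-hand side explicitly. Extracting the coefficient of $t^j$ from $G(t)$ via ${n-2i \choose j-i}$, we must show for every $j$ that
\[ \sum_{i} {m \choose i}{k \choose j-i}{m+j-i \choose j} \;=\; \sum_{i} {m \choose i}{n-i \choose i}{n-2i \choose j-i}. \]
The right-hand side can be rewritten with the trinomial coefficient ${n-i \choose i}{n-2i \choose j-i} = \frac{(n-i)!}{i!\,(j-i)!\,(n-i-j)!}$, or with ${j \choose i}{n-i \choose j}$. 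So the target becomes
\[ \sum_i {m\choose i}{k\choose j-i}{m+j-i\choose j} = \sum_i {m\choose i}{j\choose i}{n-i\choose j}. \]

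\textbf{Proving the identity.} I would treat both sides as functions of $m$ with $n=m+k$, and pass to generating functions in an auxiliary variable. First I would apply Vandermonde on the left, ${m+j-i \choose j} = \sum_l {m \choose l}{j-i \choose j-l}$. Then I would combine ${k \choose j-i}{j-i \choose l-i} = {k \choose l-i}{k-l+i \choose j-l}$, so that the $i$-sum becomes $\sum_i {m \choose i}{k \choose l-i}$-type convolutions. Those collapse by Vandermonde into coefficients of $(1+x)^{m+k}=(1+x)^n$. On the right I would likewise expand ${n-i \choose j} = [x^j](1+x)^{n-i}$ and sum over $i$ with ${m \choose i}{j\choose i}$. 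The aim is to bring both sides to a common single-sum form, for example $\sum_l {m\choose l}{n-j+l\choose l}{\cdots}$. A cleaner route worth trying first is the bivariate generating function $\sum_{j}(\cdot)\,y^j$ of both sides. On the right this is $\sum_i {m\choose i}{n-i\choose i}y^i(1+y)^{n-2i}$ by construction. On the left, ${m+j-i\choose j}$ gives $(1-y)^{-(m-i+1)}$-type factors after summing over $j$ with the $k$-part. Comparing the two expressions should reduce to a standard identity.

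\textbf{Fallback and conclusion.} If the manipulations stall, I would use induction on $k$ (equivalently on $n$ with $m$ fixed). Pascal's rule applied to ${k\choose j-i}$ on the left, and to ${n-i\choose j}$ on the right, should give matching recurrences, with the base case $k=0$ being the trivial identity ${m\choose i}{m+j-i\choose j}$ versus ${m\choose i}{j\choose i}{m-i\choose j}$ summed over $i$. A Zeilberger/WZ certificate would settle it mechanically if needed. Once the identity holds, the coefficients ${n-k\choose i}{n-i\choose i}$ are visibly nonnegative, so $\gamma$-positivity follows, and with it palindromicity and unimodality.

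The main obstacle is the double-sum identity itself. The reduction via Lemma~\ref{lem:h} and the palindromicity of $G(t)$ is routine, whereas finding the right sequence of Vandermonde collapses, or a matching recurrence, is where the real work lies.
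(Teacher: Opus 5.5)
Your reduction is sound. By Lemma~\ref{lem:h} and the palindromicity of $G(t)$, the proposition is equivalent to
\[ \sum_i {m\choose i}{k\choose j-i}{m+j-i\choose j} \;=\; \sum_i {m\choose i}{j\choose i}{n-i\choose j}, \qquad m=n-k. \]
The right-hand side is exactly the intermediate expression the paper reaches before its final rewriting ${j\choose r}{n-r\choose j}={n-r\choose r}{n-2r\choose j-r}$.

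The gap is that this identity is the entire content of the proposition, and the proposal does not prove it.

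\emph{First route.} It does not work as you set it up. After ${m+j-i\choose j}=\sum_l{m\choose l}{j-i\choose j-l}$ and ${k\choose j-i}{j-i\choose l-i}={k\choose l-i}{k-l+i\choose j-l}$, the inner sum is $\sum_i{m\choose i}{k\choose l-i}{k-l+i\choose j-l}$. Its third factor depends on $i$, so it is not a Vandermonde convolution and does not collapse to a coefficient of $(1+x)^n$.

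\emph{Generating-function route.} It has the same defect. Because of the weight ${k\choose j-i}$, the $j$-sum is not the free sum $\sum_j{m+j-i\choose j}y^j=(1-y)^{-(m-i+1)}$.

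\emph{Induction fallback.} Pascal's rule on ${k\choose j-i}$ produces ${k-1\choose j-1-i}{m+j-i\choose j}$. This is not the $(k-1,j-1)$ instance of the left side, which would carry ${m+j-1-i\choose j-1}$, so the recurrences do not close without further work. Your base case is also misstated: for $k=0$ the factor ${0\choose j-i}$ forces $i=j$, so the left side is ${m\choose j}^2$, not $\sum_i{m\choose i}{m+j-i\choose j}$. Appealing to Zeilberger/WZ without exhibiting a certificate does not fill the gap either.

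The missing idea is to split $m+j-i$ as $(m-i)+j$ instead of $m+(j-i)$. The computation then goes through in three steps:
\begin{itemize}
\item Vandermonde gives ${m+j-i\choose j}=\sum_r{m-i\choose r}{j\choose r}$.
\item The identity ${m\choose i}{m-i\choose r}={m\choose r}{m-r\choose i}$ moves $r$ to the outside.
\item The remaining inner sum $\sum_i{m-r\choose i}{k\choose j-i}={n-r\choose j}$ is a genuine Vandermonde convolution.
\end{itemize}
The result is $\sum_r{m\choose r}{j\choose r}{n-r\choose j}$, which is your target right-hand side. This is precisely the paper's argument.
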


\begin{proof}
By Lemma~\ref{lem:h} we have
\[ [t^{n-j}] h(\tau_{n,k}, t) = \sum_{i=0}^{n-k}
   {n-k \choose i}{k \choose j-i}
	 {n-k+j-i \choose j}. \]
By Vandermonde's identity,
\[ {n-k+j-i \choose j} = \sum_{r=0}^{n-k-i}
   {n-k-i \choose r}{j \choose r} \]
and hence 
\begin{align*}
[t^{n-j}] h(\tau_{n,k},t) &=
\sum_{i=0}^{n-k}{n-k \choose i}{k \choose j-i}
\sum_{r=0}^{n-k-i}{n-k-i \choose r}{j \choose r} 
\\ &= \sum_{r=0}^{n-k}{j \choose r} 
      \sum_{i=0}^{n-k-r}
{n-k \choose i}{k \choose j-i}{n-k-i \choose r}. 
\end{align*}
We now apply the identity
\[ {n-k \choose i}{n-k-i \choose r} =
   {n-k \choose r}{n-k-r \choose i} \]
to obtain that
\begin{align*}
[t^{n-j}] h(\tau_{n,k},t) &=
\sum_{r=0}^{n-k}{n-k \choose r}{j \choose r}
\sum_{i=0}^{n-k-r}{n-k-r \choose i}
{k \choose j-i}\\
&=\sum_{r=0}^{n-k}{n-k \choose r}{j \choose r}
{n-r \choose j},
\end{align*}
where the last equality follows from 
Vandermonde's identity. Since
\[ {j \choose r}{n-r \choose j} = {n-r \choose r}
   {n-2r \choose j-r}, \]
we conclude that
\[ [t^{n-j}] h(\tau_{n,k},t) = \sum_{r=0}^{n-k}
   {n-k \choose r}{n-r \choose r}
	 {n-2r \choose j-r}. \]
Thus, we have shown that
\begin{align*}
t^n h(\tau_{n,k},1/t) &= \sum_{j=0}^{n}
\sum_{r=0}^{n-k}{n-k \choose r}{n-r \choose r}
{n-2r \choose j-r} t^j.
\end{align*}
Finally, we interchange the order of summation 
to conclude that
\begin{align*}
t^n h(\tau_{n,k},1/t) &= \sum_{r=0}^{n-k}
{n-k \choose r}{n-r \choose r}t^r
\sum_{j=0}^{n}{n-2r \choose j-r}t^{j-r}\\
&=\sum_{r=0}^{\lfloor n/2 \rfloor}
{n-k \choose r} {n-r \choose r} t^r(1+t)^{n-2r}.
\end{align*}
This expression is equivalent to the proposed 
identity.
\end{proof}

\begin{remark} \rm
The polynomial $h(\tau_{n,k},t)$ is in fact 
real-rooted. Indeed, by \cite[Remark~3.1.1]{Ga05}, 
this statement is equivalent to the real-rootedness
of
\[ \gamma (\tau_{n,k},t) := 
   \sum_{i=0}^{\lfloor n/2 \rfloor}{n-k \choose i} 
	 {n-i \choose i} t^i. \]
Clearly,
\[ p_{n,k}(t) := \sum_{i=0}^{n-k}{n-k \choose i}
                 t^i = (1+t)^{n-k} \]
is real-rooted and
\[ q_n(t) := \sum_{i=0}^{\lfloor n/2 \rfloor}
   {n-i \choose i}t^i \]
satisfies the recurrence relation $q_n(t) = 
q_{n-1}(t) + t\,q_{n-2}(t)$ and hence is real-rooted
by \cite[Theorem~1.1]{LW07}. Therefore, $\gamma 
(\tau_{n,k},t)$ is real-rooted as the Hadamard 
product of $p_{n,k}(t)$ and $q_n(t)$; see 
\cite[V~155]{PS72} \cite{Wag92} for Hadamard 
products of real-rooted polynomials.
\qed
\end{remark}

\section{Magic positivity}
\label{sec:magic}

This section proves Theorem~\ref{thm:magic}.
The formulas in the next statement follow from 
the relevant definitions in a relatively 
straightforward way.
\begin{proposition} \label{prop:Ehr}
For all $n,k$:
\begin{itemize}
\itemsep=0pt
\item[(a)]
\[ n! \, \Ehr(\qQ_{n,k}, t) = \sum_{j=0}^k
   (-1)^j {k \choose j} \prod_{i=0}^{n-1}
   ((n-j)t + n-j-i). \]

\item[(b)]
\[ n! \, \Ehr(\qQ_{n,k}, t) = \sum_{i=0}^n c_i
   t^i(1+t)^{n-i}, \]
where
\begin{equation} \label{eq:def-ci}
\sum_{j=0}^k (-1)^j {k \choose j}
\prod_{i=0}^{n-1} (it + n-j-i) = \sum_{i=0}^n
c_i t^i. 
\end{equation}
\end{itemize}
\end{proposition}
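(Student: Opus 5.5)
For part (a) I will count lattice points of $m\qQ_{n,k}$ by inclusion–exclusion on the box constraints. For $m \in \NN$, the set $m\qQ_{n,k}\cap\NN^n$ consists of the $x\in\NN^n$ with $x_i \le m$ for $i\in[k]$ and $x_1+\cdots+x_n \le mn$. Let $A_i$ be the set of $x\in\NN^n$ with coordinate sum at most $mn$ and $x_i \ge m+1$, for $i\in[k]$. The number of points we want is then the number of $x\in\NN^n$ with sum at most $mn$ that avoid every $A_i$.

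To compute $|\bigcap_{i\in S}A_i|$ for $|S|=j$, substitute $y_i=x_i-(m+1)$ for $i\in S$. This turns the condition into $y\in\NN^n$ with coordinate sum at most $mn-j(m+1)$. Counting by adding a slack variable gives
\[ \binom{mn-j(m+1)+n}{n} = \binom{(n-j)m+n-j}{n}. \]
This formula is valid when $(n-j)m+n-j\ge 0$, which holds because $j\le k\le n$. For $j=n$ the expression equals $\binom{0}{n}=0$, which is also the correct count, since the sum would have to be $\le -n<0$.

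Multiplying by $n!$ gives
\[ n!\binom{(n-j)m+n-j}{n}=\prod_{i=0}^{n-1}\bigl((n-j)m+n-j-i\bigr). \]
Inclusion–exclusion then yields the formula in (a) for every $m\in\NN$. Both sides are polynomials in $t$ that agree at infinitely many points, so they are equal as polynomials.

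For part (b), the plan is the standard substitution converting the basis $t^i(1+t)^{n-i}$ into monomials. Dividing $\sum_i c_i t^i(1+t)^{n-i}$ by $(1+t)^n$ and setting $u=t/(1+t)$ gives $\sum_i c_i u^i$. So (b) is equivalent to the identity
\[ n!\,\Ehr(\qQ_{n,k},t)=(1+t)^n\,P\!\left(\frac{t}{1+t}\right), \qquad P(u)=\sum_{j=0}^k(-1)^j\binom kj\prod_{i=0}^{n-1}(iu+n-j-i). \]
Equivalently, one can apply the inverse substitution $t=u/(1-u)$ to (a) and multiply by $(1-u)^n$. The check is then term by term in $j$. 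Each factor $(n-j)t+n-j-i$, multiplied by $(1-u)$ at $t=u/(1-u)$, becomes $(n-j)u+(n-j-i)(1-u)=iu+n-j-i$. Taking the product over $i$ recovers exactly the $j$th summand of the left side of \eqref{eq:def-ci}.

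No step is a real obstacle. The only care needed is the range of validity of the binomial formula in (a), in particular the case $j=n$ (possible only when $k=n$), and that has been checked above. The nonnegativity of the $c_i$ is not claimed here; it is the content of Theorem~\ref{thm:magic}.
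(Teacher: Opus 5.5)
Your proposal is correct and follows essentially the same route as the paper: (a) comes from inclusion--exclusion on the constraints $x_i \le m$, with each $j$-fold intersection counted by $\binom{(n-j)(m+1)}{n}$, and (b) comes from the factor identity $(n-j)t+n-j-i = it+(n-j-i)(1+t)$, which is your substitution $u=t/(1+t)$ written differently. Your check that the binomial count stays valid, giving $0$, when $mn-j(m+1)<0$ fills in a detail the paper leaves implicit.
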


\begin{proof}
According to the definition of $\qQ_{n,k}$, 
$\Ehr(\qQ_{n,k}, m)$ is equal to the number of 
points $(x_1, x_2,\dots,x_n) \in \NN^n$ such 
that 
\[ \begin{tabular}{l}
$x_1 + x_2 + \cdots + x_n \le mn$, and \\ [1mm] 
$x_i \le m$ \textrm{for} $1 \le i \le k$
\end{tabular} \]
for every $m \in \NN$. Since the number of points 
$(x_1, x_2,\dots,x_n) \in \NN^n$ which satisfy 
the first inequality and violate the second for 
any $j$ given values of $i \in [k]$ is equal to 
$\binom{(n-j)(m+1)}{n}$, it follows by 
inclusion-exclusion that 
\begin{equation} \label{eq:Ehr-nk}
\Ehr(\qQ_{n,k},m) = \sum_{i=0}^{k} (-1)^j 
\binom{k}{j} \binom{(n-j)(m+1)}{n}
\end{equation} 
for every $m \in \NN$. This proves part (a).
Part (b) follows by rewriting the formula of 
part (a) in the form 
\[ n! \, \Ehr(\qQ_{n,k}, t) = \sum_{j=0}^k
   (-1)^j {k \choose j} \prod_{i=0}^{n-1}
   (it + (n-j-i)(1+t)). \]
\end{proof}

{\scriptsize
\begin{table}[hptb]
\begin{center}
\begin{tabular}{| l || l | l | l | l | l |} 
\hline
& $k=0$ & $k=1$ & $k=2$ & $k=3$ & $k=4$ \\ \hline 
\hline
$n=1$   & 1 & 1 & & & \\ \hline
$n=2$  & $2t+2$ & $t+2$ & 2 & & \\ \hline
$n=3$  & $6t^2+15t+6$ & $2t^2+11t+6$ & $6t+6$ & 
          6 & \\ \hline
$n=4$  & $24t^3+104t^2+104t+24$ & 
         $6t^3+59t^2+86t+24$ 
       & $22t^2+64t+24$ & $36t+24$ & 24
				 \\ \hline
\end{tabular}

\bigskip
\caption{The polynomials $f_{n,k}(t)$ for 
$n \le 4$.}
\label{tab:fnk}
\end{center}
\end{table}}

We need to show that the left-hand side of
Equation~(\ref{eq:def-ci}) has nonnegative 
coefficients; see Table~\ref{tab:fnk} for 
its value for $n \le 4$. For that reason, we 
introduce the following variant of the classical
parking protocol for cars (see, for instance,
\cite{Ya15}). Suppose that $n$ cars $C_1, 
C_2,\dots,C_n$ want to park on a street with 
$n$ parking spaces $1, 2,\dots,n$. Any word 
$w = (w_1, w_2,\dots,w_n) \in [n]^n$ may be 
interpreted as a sequence of parking 
preferences, where $w_i$ is the preferred 
parking space of $C_i$. The cars park one at 
a time in the order $C_1, C_2,\dots,C_n$. 
Each car $C_i$ parks in its preferred space 
$w_i$, if available. Otherwise, it parks in
the largest available parking space, i.e., 
it parks in space $n$, if available, 
otherwise in space $n-1$, if available, and 
so on. Note that every car parks, under 
this protocol.

Given $w \in [n]^n$, we say that a car is 
\emph{lucky} if it parks in its preferred 
parking space and denote by $\unlucky(w)$ 
the number of cars which are \emph{not} 
lucky. For example, if $n = 5$ and $w = (3, 
5, 3, 4, 1)$, then $C_1, C_2$ and $C_5$ are 
lucky and $\unlucky(w) = 2$.
\begin{proposition} \label{prop:fnk}
Let
\begin{equation} \label{eq:fnk-def}
f_{n,k}(t) := \sum_{j=0}^k (-1)^j {k \choose j}
\prod_{i=0}^{n-1} (it + n-j-i).
\end{equation}
Then,
\begin{equation} \label{eq:fnk-int}
f_{n,k}(t) = \sum_{w \in \wW_{n,k}}
t^{\unlucky(w)},
\end{equation}
where $\wW_{n,k}$ is the set of words $w \in
[n]^n$ in which each of $1, 2,\dots,k$ appears
at least once.
\end{proposition}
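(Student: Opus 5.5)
The plan is to prove \eqref{eq:fnk-int} by inclusion--exclusion, matching the alternating sum in \eqref{eq:fnk-def} term by term with a sieve over the values $1,2,\dots,k$ that are required to occur in $w$. For $S \subseteq [k]$ with $|S| = j$, let $N_S(t)$ be the generating function, by $\unlucky$, of the words $w \in [n]^n$ that fail a suitable ``occurrence'' condition at every $v \in S$. Then $\sum_{w \in \wW_{n,k}} t^{\unlucky(w)} = \sum_{S \subseteq [k]} (-1)^{|S|} N_S(t)$. What has to be shown is that $N_S(t)$ depends only on $j$ and equals $\prod_{i=0}^{n-1}(it + n-j-i)$.

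The natural reading of the product is sequential. When car $C_{i+1}$ arrives, exactly $i$ spaces are occupied. Preferring any of those $i$ spaces makes the car unlucky, which contributes weight $it$. Preferring a free space makes it lucky, with weight $1$ each. So $\prod_{i=0}^{n-1}(it + n - i) = f_{n,0}(t)$ counts all of $[n]^n$ by $\unlucky$, which settles the case $k=0$. The factor $it + n-j-i$ should arise by forbidding the cars from parking \emph{luckily} in a space of $S$. For this I would take the occurrence condition to be ``some car parks luckily in space $v$'' rather than ``$v$ occurs in $w$''. These are the same condition: if $v$ occurs in $w$, the first car preferring $v$ finds $v$ free unless an earlier unlucky car took it. But an unlucky car parks in the largest available space, so it takes $v$ only if every space above $v$ is full, and in that situation I would check directly that the word still contains $v$ and that a lucky car ends up at $v$. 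Proving this equivalence carefully is the first key step.

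The main obstacle is the second key step: showing that, for words in which no car parks luckily in $S$, car $C_{i+1}$ always has exactly $n-j-i$ free spaces outside $S$. Spaces in $S$ can be filled only by unlucky cars, and those go to the largest available space. So the claim holds as long as the unlucky cars never reach $S$, but it can fail once all spaces above $\min S$ are full. I would first try to show that this can only happen in a stage where the corresponding factors cancel in the signed sum. Failing that, I would replace the direct sieve by a recursion in $k$. From \eqref{eq:fnk-def}, $f_{n,k}(t) = f_{n,k-1}(t) - g(t)$, where $g$ is the analogous alternating sum over $[k-1]$ with the set of values reduced by one (the value $k$ removed). Correspondingly, $\wW_{n,k}$ is $\wW_{n,k-1}$ minus the words avoiding $k$. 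Using the symmetry of the parking rule under relabeling the spaces below the largest one, I would show that the words avoiding $k$ have the same $\unlucky$-distribution as $\wW_{n-?,k-1}$-type words on $n-1$ available preferences. This reduces the claim to proving that the $\unlucky$-statistic on words over an $(n-j)$-letter alphabet is given by the product, which again comes down to the sequential count above. I would verify this against Table~\ref{tab:fnk} for $n \le 4$ before writing it out in general.
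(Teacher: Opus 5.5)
There is a genuine gap: the term-by-term sieve rests on a false claim. Your sequential reading of the product is correct for $k=0$. The rest of the plan, however, needs the $j$-th term $P_j(t):=\prod_{i=0}^{n-1}(it+n-j-i)$ of the alternating sum to be the $\unlucky$-generating function of the words of $[n]^n$ that fail the occurrence condition on a prescribed $j$-set $T\subseteq[k]$ (your $S$). This is false under either reading of the condition.

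For $j\ge 2$, the factors with $i>n-j$ have negative constant term, so $P_j(t)$ need not have nonnegative coefficients. For $n=3$, $j=2$ one gets $P_2(t)=t(2t-1)=2t^2-t$. The only word of $[3]^3$ avoiding $\{1,2\}$ is $(3,3,3)$, which contributes $t^2$.

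Even for $j=1$, the distribution depends on $T$ and not only on $|T|$. For $n=3$, the words avoiding $1$ give $4t+4t^2=P_1(t)$, but those avoiding $2$ give $5t+3t^2$.

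Your first key step also fails. For $w=(3,3,2)$, car $C_2$ takes space $2$ unluckily, and $C_3$, the only car preferring $2$, is then unlucky. So $2$ occurs in $w$ but no car parks luckily at space $2$. With the ``lucky at $v$'' condition there are $9\neq 2^3=P_1(1)$ words of $[3]^3$ with no lucky car at space $2$. The two conditions do agree jointly over all of $[k]$, but not for individual $v$, and the individual sets are what the sieve terms count.

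Your fallback recursion in $k$ ends by reducing to the statement that words over an $(n-j)$-letter alphabet are counted by $P_j(t)$, which these examples refute. The identity emerges only after cancellation in the whole signed sum. The signed terms cannot be matched one by one with sets of parking words.

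The missing idea, as in the paper, is to substitute $t\mapsto 1+t$. Then
\[ f_{n,k}(1+t)=\sum_{j}(-1)^j\binom{k}{j}\prod_{i=0}^{n-1}(it+n-j), \]
and each term is a genuine generating function. It counts the maps $\varphi:[n]\to[2n-1]$ with $\varphi(i)\le n+i-1$ that avoid $j$ prescribed values of $[k]$, weighted by $t^{\exc(\varphi)}$, where $\exc(\varphi)$ is the number of $i$ with $\varphi(i)>n$. The sieve is done on these maps and yields those hitting all of $1,\dots,k$.

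Both these maps and $\wW_{n,k}$ are then refined by the set $S=\{s_1<\dots<s_k\}$ of positions where $1,\dots,k$ first occur, in this order by symmetry. For $w\in\wW_{n,k}(S)$ the parking process is tame. The cars $C_{s_i}$ are lucky, and since only $n-k$ cars lie outside $S$, every other car parks in $\{k+1,\dots,n\}$. So no unlucky car ever reaches $[k]$, which is exactly the phenomenon that breaks your count.

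Consequently, a car $C_j$ with $s_i<j<s_{i+1}$ contributes the factor $q_j(S;t)=n-k+i-j+1+(j-1)t$. The corresponding position on the map side contributes $p_j(S;t)=n-k+i+(j-1)t$. The identity $p_j(S;t)=q_j(S;1+t)$ then finishes the proof.
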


\begin{proof}
We first observe that
\[ f_{n,k}(1+t) = \sum_{j=0}^k (-1)^j {k \choose j}
\prod_{i=0}^{n-1} (it + n-j) \]
and attempt to interpret combinatorially the
right-hand side. The $j=0$ term of the sum may be
interpreted as
\[ \prod_{i=0}^{n-1} (it + n) = \sum_{\varphi \in
\fF_n} t^{\exc(\varphi)}, \]
where $\fF_n$ is the set of maps $\varphi: [n] \to
[2n-1]$ such that $\varphi(i) \le n+i-1$ for every
$i \in [n]$ and $\exc(\varphi)$ is the number of
$i \in [n]$ such that $\varphi(i) > n$. More
generally, for every $j \in \{0, 1,\dots,k\}$ and
all $1 \le i_1 < \cdots < i_j \le k$ we have
\[ \prod_{i=0}^{n-1} (it + n-j) = \sum_{\varphi}
t^{\exc(\varphi)}, \]
where the sum ranges over all maps $\varphi \in
\fF_n$ which contain none of $i_1,\dots,i_j$ in
their image. Thus, by inclusion-exclusion,
\[ f_{n,k}(1+t) = \sum_{w \in \fF_{n,k}}
  t^{\exc(\varphi)}, \]
where $\fF_{n,k}$ stands for the set of maps
$\varphi \in \fF_n$ which contain each of $1,
2,\dots,k$ in their image.

Let $g_{n,k}(t)$ denote the right-hand
side of Equation~(\ref{eq:fnk-int}). To prove
that $f_{n,k}(t) = g_{n,k}(t)$, we will
enumerate the elements of $\fF_{n,k}$ and
$\wW_{n,k}$ with a given set $S$ of places where
$1, 2,\dots,k$ appear for the first time. By
symmetry, we may consider only elements of
these sets for which $1, 2,\dots,k$ appear for
the first time in this order. So, let $S =
\{ s_1, s_2,\dots,s_k\}$ be a $k$-element
subset of $[n]$, where $s_1 < s_2 < \cdots <
s_k$, and let

\begin{align*}
f_{n,k}(S; 1+t) &= \sum_{w \in \fF_{n,k}(S)}
     t^{\exc(\varphi)} \\
g_{n,k}(S; t) &=   \sum_{w \in \wW_{n,k}(S)}
     t^{\unlucky(w)},
\end{align*}
where $\fF_{n,k}(S)$ stands for the set of maps
$\varphi \in \fF_{n,k}$ for which $s_i$ is the
smallest $j \in [n]$ such that $\varphi(j) = i$,
for every $i \in [k]$, and similarly, $\wW_{n,k}
(S)$ stands for the set of words $w \in \wW_{n,k}$
for which $s_i$ is the smallest $j \in [n]$ such
that $w_j = i$, for every $i \in [k]$. Then,

\begin{align*}
f_{n,k}(1+t) &= k! \sum_S f_{n,k}(S; 1+t) \\
g_{n,k}(t) &= k! \sum_S g_{n,k}(S; t),
\end{align*}
where the sums range over all $k$-element subsets
of $[n]$. Hence, it suffices to show that
\begin{equation} \label{eq:f=g}
f_{n,k} (S; 1+t) = g_{n,k}(S; 1+t)
\end{equation}
for every $S$. This will follow from product
formulas for $f_{n,k} (S; 1+t)$ and $g_{n,k}(S;
1+t)$, which are consequences of the
multiplication principle. Indeed, let $S
\subseteq [n]$ be as before and set $s_0 := 0$
and $s_{k+1} := n+1$. Given $\varphi \in
\fF_{n,k}(S)$ and $j \in [n]$ such that $s_i <
j < s_{i+1}$ for some $i \in \{0, 1,\dots,k\}$,
there are $j-1$ possible values for $\varphi(j)$
larger than $n$ and $n-k+i$ possible values $\le
n$, namely the elements of $[n]$ other than $i+1,
i+2,\dots,k$. Hence
\begin{equation} \label{eq:fnk-p}
f_{n,k} (S; 1+t) = \prod_{j \in [n] \sm S}
    p_j(S; t),
\end{equation}
where
\begin{equation} \label{eq:p}
p_j(S; t) = n-k+i + (j-1)t
\end{equation}
for $j \in [n] \sm S$ with $s_i < j < s_{i+1}$.

Similarly, let $w \in \wW_{n,k}(S)$ be a word and
$j \in [n] \sm S$, so that $s_i < j < s_{i+1}$
for some $i \in \{0, 1,\dots,k\}$. We observe
that all cars which correspond to elements of $S$ 
are lucky. Thus, when car $C_j$ attempts to park, 
there are $j-1$ values
of $w_j$ which make car $C_j$ unlucky, namely the
numbers of the spaces where cars $C_1, 
C_2,\dots,C_{j-1}$ have parked, and $n-k-(j-1-i)$ 
values of $w_j$ which make car $j$ lucky, as many 
as the spaces among those numbered as 
$k+1, k+2,\dots,n$ which are free at that stage. 
We conclude that
\begin{equation} \label{eq:gnk-q}
g_{n,k} (S; t) = \prod_{j \in [n] \sm S}
  q_j(S; t),
\end{equation}
where
\begin{equation} \label{eq:q}
q_j(S; t) = n-k+i-j+1 + (j-1)t
\end{equation}
for $j \in [n] \sm S$ with $s_i < j < s_{i+1}$.
We now observe, in view of Equations~(\ref{eq:p})
and~(\ref{eq:q}), that $p_j(S; t) = q_j(S; 1+t)$
for every $j \in [n] \sm S$. Thus, (\ref{eq:f=g})
holds by Equations~(\ref{eq:fnk-p})
and~(\ref{eq:gnk-q}) and the proof follows.
\end{proof}

\begin{proof}[Proof of Theorem~\ref{thm:magic}.]
This follows by combining part (b) of
Proposition~\ref{prop:Ehr} with
Proposition~\ref{prop:fnk}.
\end{proof}

\section{The $h^*$-polynomial}
\label{sec:h-star}

This section proves Theorem~\ref{thm:h-star}.
\begin{proof}[Proof of Theorem~\ref{thm:h-star}.]
By Equation~(\ref{eq:Ehr-nk}), 
\[ \begin{aligned}
   \Ehr(\qQ_{n,k},m) &= \sum_{i=0}^k (-1)^i 
        \binom{k}{i} \binom{(n-i)(m+1)}{n} \\
&= \sum_{i=0}^k (-1)^i \binom{k}{i} [t^n] 
                (1+t)^{(n-i)(m+1)} \\
&= [t^n] \sum_{i=0}^k (-1)^i \binom{k}{i} 
                (1+t)^{(n-i)(m+1)} \\
&= [t^n] (1+t)^{n(m+1)} \sum_{i=0}^k (-1)^i 
    \binom{k}{i} \bigl( (1+t)^{-(m+1)} \bigr)^i \\
&= [t^n] (1+t)^{n(m+1)} \bigl( 1 - (1+t)^{-(m+1)} 
          \bigr)^k \\ \bigskip
&= [t^n] (1+t)^{(n-k)(m+1)} 
          \bigl( (1+t)^{m+1} - 1 \bigr)^k.
\end{aligned} \]
We expand the two factors as
\[ \bigl( (1+t)^{m+1} - 1 \bigr)^k
   = \prod_{i=1}^k \Bigl( \sum_{a_i = 1}^{m+1} 
	 \binom{m+1}{a_i} t^{a_i} \Bigr), \]
\[ (1+t)^{(n-k)(m+1)} = \prod_{i=k+1}^n \Bigl( 
   \sum_{a_i = 0}^{m+1} \binom{m+1}{a_i} t^{a_i} 
	 \Bigr) \]
and extract the coefficient of $t^n$ to conclude
that
\begin{equation}\label{eq:Ehr-wnk}
\Ehr(\qQ_{n,k},m) =
\sum_{\substack{a_1 + \dots + a_n = n \\
a_1,\dots,a_k \ge 1,\; a_{k+1},\dots,a_n \ge 0}}
\; \prod_{i=1}^{n} \binom{m+1}{a_i},
\end{equation}
where the sum ranges over all weak compositions 
$\mu = (a_1, a_2,\dots,a_n)$ of $n$ such that 
$a_1, a_2,\dots,a_k \ge 1$.  For such a weak 
composition $\mu$, we denote by $\sS_\mu$ the 
set of all permutations of the multiset
containing $a_i$ copies of $i$ for $1 \le i \le 
n$. MacMahon's formula \cite[p.~211]{Mac04} 
states that 
\[ \sum_{m \ge 0} \binom{m+a_1}{a_1} 
   \binom{m+a_2}{a_2} \!\cdots\!
   \binom{m+a_n}{a_n} t^m
   = \frac{\displaystyle\sum_{w \in \sS_\mu} 
	 t^{\des(w)}}{(1-t)^{\,n+1}}. \]
Since $a_i>0$ for at least one $i$, we may 
replace $m$ by $-m$ in the coefficient of $t^m$ 
on the left-hand side and apply 
\cite[Proposition~4.2.3]{StaEC1} to reach the 
identity
\[ \sum_{m\ge 0} \binom{m+1}{a_1} 
   \binom{m+1}{a_2} \!\cdots\! \binom{m+1}{a_n} 
	 t^m = \frac{\displaystyle\sum_{w \in \sS_\mu} 
   t^{\,n-1-\des(w)}}{(1-t)^{\,n+1}}. \]
We now sum this identity over all weak 
compositions $\mu$ of $n$ satisfying the 
conditions in~(\ref{eq:Ehr-wnk}) and observe 
that the union $\bigcup_\mu {\sS_\mu}$, where 
$\mu$ ranges over these weak compositions, is 
exactly the set $\wW_{n,k}$. We conclude that
\[ \sum_{m\ge 0} \Ehr(\qQ_{n,k},m) \,t^m
 = \frac{\displaystyle\sum_{w \in \wW_{n,k}} 
 t^{\,n-1-\des(w)}}{(1-t)^{n+1}} \]
and the proof follows.
\end{proof}

\section{Generalizations}
\label{sec:gen}

A natural generalization of $\qQ_{n,k}$ is the 
lattice polytope $\qQ_{n,d,k}$ in $\RR^n$ 
defined by the inequalities
\[ \begin{tabular}{l}
$x_i \ge 0$ \textrm{for} $1 \le i \le n$, \\
[1mm] $x_i \le 1$ \textrm{for} $1 \le i \le k$,
and \\ [1mm] $x_1 + x_2 + \cdots + x_n \le d$,
\end{tabular} \]
where $n, d$ are positive integers and $k \in 
\NN$ is such that $k \le \min \{n, d\}$. A nice 
feature of this polytope is that its normalized
volume is equal to the number of maps $f: [n] 
\to [d]$ which contain $1, 2,\dots,k$ in their 
image. This follows by computing the coefficient
of $t^n/n!$ in the expression 
\begin{equation} \label{eq:Ehr-ndk}
\Ehr(\qQ_{n,d,k},t) = \sum_{i=0}^{k} (-1)^j 
\binom{k}{j} \binom{(d-j)t+n-j}{n},
\end{equation} 
which directly generalizes 
Equation~(\ref{eq:Ehr-nk}).

One can easily find examples of polytopes 
$\qQ_{n,d,k}$ with $d < n$ (such as the 
3-dimensional simplex $\qQ_{3,2,0}$) for which 
the Ehrhart polynomial is \emph{not} magic 
positive. The following generalization of 
Theorem~\ref{thm:magic} gives a positive 
result for $d \ge n$. We sketch the proof,
which directly extends that of 
Theorem~\ref{thm:magic}. The main difference 
is in the parking protocol used, where parking
preferences now are words $w = (w_1, 
w_2,\dots,w_n) \in [d]^n$ (there are again $n$
cars $C_1, C_2,\dots,C_n$ and $n$ parking 
spaces $1, 2,\dots,n$). Every preference $w_i 
> n$ is considered unavailable and thus, car 
$C_i$ parks in the largest available space.
\begin{theorem} \label{thm:magic-gen}
The Ehrhart polynomial of $\qQ_{n,d,k}$ is magic
positive whenever $d \ge n$. More precisely, we 
have
\[ n! \, \Ehr(\qQ_{n,d,k}, t) = \sum_{i=0}^n c_i 
   t^i(1+t)^{n-i}, \]
where 
\begin{equation} \label{eq:def-g-ndk}
\sum_{i=0}^n c_i t^i = \sum_{w \in \wW_{n,d,k}}
   t^{\unlucky(w)} 
\end{equation}
and $\wW_{n,d,k}$ is the set of words $w \in
[d]^n$ in which each of $1, 2,\dots,k$ appears
at least once.
\end{theorem}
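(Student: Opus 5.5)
We follow the proof of Theorem~\ref{thm:magic} step by step, changing only the counts. First we establish the analogue of Proposition~\ref{prop:Ehr}. By inclusion--exclusion as in~(\ref{eq:Ehr-ndk}), $\Ehr(\qQ_{n,d,k},m)$ equals $\sum_j (-1)^j\binom{k}{j}\binom{(d-j)m+n-j}{n}$: forcing $x_{i}\ge m+1$ for $j$ chosen indices leaves $x_1+\cdots+x_n\le dm-j(m+1)$. Hence
\[ n!\,\Ehr(\qQ_{n,d,k},t)=\sum_{j=0}^k(-1)^j\binom{k}{j}\prod_{i=0}^{n-1}\bigl((d-j)t+n-j-i\bigr). \]
We rewrite each factor as $(d-n+i)t+(n-j-i)(1+t)$. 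This shows that $n!\,\Ehr=\sum_i c_it^i(1+t)^{n-i}$, where
\[ \sum_i c_it^i=f_{n,d,k}(t):=\sum_{j=0}^k(-1)^j\binom{k}{j}\prod_{i=0}^{n-1}\bigl((d-n+i)t+n-j-i\bigr). \]
Since the polynomials $t^i(1+t)^{n-i}$ form a basis, these $c_i$ are well defined. Here the hypothesis $d\ge n$ first matters: it makes the coefficients $d-n+i$ nonnegative.

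Next we interpret $f_{n,d,k}(1+t)=\sum_j(-1)^j\binom kj\prod_{i=0}^{n-1}\bigl((d-n+i)t+d-j\bigr)$. Let $\fF_{n,d}$ be the set of maps $\varphi:[n]\to[2d-1]$ (or any suitable ground set) such that $\varphi(i)\le d$ or $d<\varphi(i)\le 2d-n+i-1$, and count exceedances $\varphi(i)>d$. Exactly as before, inclusion--exclusion gives
\[ f_{n,d,k}(1+t)=\sum_{\varphi}t^{\exc(\varphi)}, \]
summed over the maps $\varphi\in\fF_{n,d}$ hitting each of $1,\dots,k$. We then refine by the set $S=\{s_1<\dots<s_k\}$ of first-occurrence positions, with $1,\dots,k$ appearing first in that order; by symmetry this costs a factor $k!$. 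For $j\notin S$ with $s_i<j<s_{i+1}$, the multiplication principle gives the factor
\[ p_j(S;t)=d-k+i+(d-n+j-1)t. \]

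On the word side, we use the modified parking protocol for $w\in\wW_{n,d,k}(S)$. Cars at positions in $S$ prefer spaces $1,\dots,k$ when those are still free, so they are lucky. We must check that no earlier car occupies a space in $[k]$ before its first preferrer arrives. An earlier car takes a space in $[k]$ only by falling back to the largest free space, and this can happen only after the spaces $k+1,\dots,n$ are full. The count below shows this never occurs before time $s_k$, because at most $j-1$ cars have parked. For car $C_j$ with $j\notin S$ and $s_i<j<s_{i+1}$, the lucky values of $w_j$ are the still-free spaces among $k+1,\dots,n$, numbering $n-k-(j-1-i)$. The unlucky values are the $j-1$ occupied spaces together with the $d-n$ values exceeding $n$, giving $d-n+j-1$ in all. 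The preferences in $[i+1,k]$ are excluded because they would be first occurrences. This yields
\[ q_j(S;t)=n-k+i-j+1+(d-n+j-1)t, \]
and $q_j(S;1+t)=d-k+i+(d-n+j-1)t=p_j(S;t)$. Taking products over $j\notin S$, summing over $S$ and multiplying by $k!$ gives $f_{n,d,k}(t)=\sum_{w\in\wW_{n,d,k}}t^{\unlucky(w)}$. This proves~(\ref{eq:def-g-ndk}), and magic positivity follows because the right-hand side has nonnegative coefficients.

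The main obstacle is bookkeeping in the word-side count. We must verify that the lucky count $n-k-(j-1-i)$ is correct, that is, that the earlier unlucky cars have taken spaces in $\{k+1,\dots,n\}$ and not in $[k]$, so that the free spaces among $k+1,\dots,n$ number exactly $n-k-(j-1-i)$. The largest-available rule ensures this, since there are at least $n-j+1\ge k-i$ free spaces and the spaces above $k$ are filled first. We must also confirm that the numbers of admissible exceedance values match, namely $d-n+j-1\ge0$, which again uses $d\ge n$.
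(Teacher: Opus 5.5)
Your proposal is correct and follows the paper's proof exactly: the same rewriting $(d-j)t+n-j-i=(d-n+i)t+(n-j-i)(1+t)$, the same exceedance-counted maps into $[2d-1]$, the same refinement by first-occurrence sets $S$, and the same factors $p_j(S;t)$ and $q_j(S;t)$. You also supply the word-side verification that the paper leaves out.

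One small fix in that verification. To guarantee that an unlucky car lands above $k$, you need $n-j\ge k-i$, i.e.\ strictly more than $k-i$ free spaces; the inequality you state, $n-j+1\ge k-i$, is not enough. The correct one holds because $s_{i+1},\dots,s_k$ lie in $\{j+1,\dots,n\}$. Note also that your invariant argument never uses the order in which $1,\dots,k$ first appear, and this is what justifies the factor $k!$ on the word side.
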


\begin{proof}
We may rewrite Equation~(\ref{eq:Ehr-ndk}) as
\begin{align*} 
n! \, \Ehr(\qQ_{n,d,k}, t) &= 
   \sum_{j=0}^k (-1)^j {k \choose j} 
	 \prod_{i=0}^{n-1} ((d-j)t + n-j-i) \\
&= \sum_{i=0}^n c_i
   t^i(1+t)^{n-i}, 
\end{align*}
where
\begin{equation} \label{eq:def-ci-d}
\sum_{j=0}^k (-1)^j {k \choose j}
\prod_{i=0}^{n-1} ((d-n+i)t + n-j-i) = 
\sum_{i=0}^n c_i t^i. 
\end{equation}
Let us denote the left-hand side of 
Equation~(\ref{eq:def-ci-d}) by $f_{n,d,k}(t)$
and the right-hand side of 
Equation~(\ref{eq:def-g-ndk}) by $g_{n,d,k}(t)$.
We need to show that $f_{n,d,k}(t) = g_{n,d,k}
(t)$. We note that 
\[ f_{n,d,k}(1+t) = \sum_{j=0}^k 
   (-1)^j {k \choose j} 
	 \prod_{i=0}^{n-1} ((d-n+i)t + d-j) \]
and interpret the $j=0$ term of this sum as 
\[ \prod_{i=0}^{n-1} ((d-n+i)t + d) = 
   \sum_{\varphi \in \fF_{n,d,0}} 
	 t^{\exc(\varphi)}, \]
where $\fF_{n,d,0}$ is the set of maps 
$\varphi: [n] \to [2d-1]$ such that $\varphi
(i) \le 2d-n+i-1$ for every $i \in [n]$ and 
$\exc(\varphi)$ is the number of $i \in [n]$ 
such that $\varphi(i) > d$. As in the proof 
of Proposition~\ref{prop:fnk}, we deduce by 
inclusion-exclusion that 
\[ f_{n,d,k}(1+t) = \sum_{w \in \fF_{n,d,k}}
  t^{\exc(\varphi)}, \]
where $\fF_{n,d,k}$ is the set of maps 
$\varphi \in \fF_{n,d,0}$ which contain each 
of $1, 2,\dots,k$ in their image. For $S =
\{ s_1, s_2,\dots,s_k\} \subseteq [n]$ with 
$s_1 < s_2 < \cdots < s_k$, we let

\begin{align*}
f_{n,d,k}(S; 1+t) &= \sum_{w \in \fF_{n,d,k}
   (S)} t^{\exc(\varphi)} \\
g_{n,d,k}(S; t) &= \sum_{w \in \wW_{n,d,k}(S)}
     t^{\unlucky(w)},
\end{align*}
where $\fF_{n,d,k}(S)$ is the set of maps
$\varphi \in \fF_{n,d,k}$ for which $s_i$ is 
the smallest $j \in [n]$ such that $\varphi(j) 
= i$, for every $i \in [k]$, and similarly, 
$\wW_{n,d,k}(S)$ is the set of words $w \in 
\wW_{n,d,k}$ for which $s_i$ is the smallest 
$j \in [n]$ such that $w_j = i$, for every 
$i \in [k]$. Then, it suffices to show that
$f_{n,d,k} (S; 1+t) = g_{n,d,k}(S; 1+t)$ for every 
$S$ and the proof of Proposition~\ref{prop:fnk}
works, with the modified formulas
\[ p_j(S; t) = d-k+i + (d-n+j-1)t \]
and
\[ q_j(S; t) = n-k+i-j+1 + (d-n+j-1)t \]
for $j \in [n] \sm S$ with $s_i < j < s_{i+1}$; 
the details are omitted.
\end{proof}

We conclude with a conjectural generalization 
of Theorem~\ref{thm:h-star} for every arbor 
$\tau$. We recall that the set $\dD(v)$ is 
defined in the introduction for all vertices 
$v$ of $\tau$. Extensive experimentation by 
computer supports the following conjecture. 
\begin{conjecture}
\label{conj:h-star}
For every arbor $\tau$ of size $n$ 
\[ h^\ast(\qQ_\tau, t) = \sum_{w \in \wW_\tau}
       t^{n-1-\des(w)}, \]
where $\wW_\tau$ is the set of words $w \in 
[n]^n$ with the following property: for every 
vertex $v$ of $\tau$, the elements of $\dD(v)$ 
appear a total of at least $|\dD(v)|$ times in 
$w$.
\end{conjecture}

The validity of this statement is unclear even
for linear arbors, studied in~\cite{Ath26}. An 
even more general conjecture will appear
in~\cite{AC26+}. 

\medskip
\noindent
\textbf{Acknowledgements}. The second and third
named authors acknowledge support from the China 
Scholarship Council (no. 202306060164 and 
202508370093, respectively) for their visit to 
the Department of Mathematics of the National and
Kapodistrian University of Athens during the 
academic year 2025--26. The authors also wish to 
thank Luis Ferroni for making~\cite{AFM25} 
available to them and Frederic Chapoton for 
helpful discussions.


\begin{thebibliography}{99}
%
\bibitem{Ath18}
C.A.~Athanasiadis,
\textit{Gamma-positivity in combinatorics and geometry},
S\'em. Lothar. Combin. {\bf~77} (2018), Article B77i,
64pp (electronic).
%
\bibitem{Ath26}
C.A.~Athanasiadis,
\textit{Lattice point enumeration of polytopes 
associated
to integer compositions}, Ann. Comb. (to appear),
{\tt arXiv:2510.23903}.
%
\bibitem{AC26+}
C.A.~Athanasiadis and F.~Chapoton,
\textit{Polytopes and posets associated to preorders},
in preparation.
%
\bibitem{AFM25}
N.~Avila, L.~Ferroni and A.H.~Morales,
\textit{Luck and magic for Pitman--Stanley polytopes},
FPSAC Extended Abstract, 2025.
%
\bibitem{BR15}
M.~Beck and S.~Robins,
Computing the Continuous Discretely: Integer-Point
Enumeration in Polyhedra,
Springer, 2015.
%
\bibitem{Bra06}
P.~Br\"and\'en,
\textit{On linear transformations preserving the P\'olya 
frequency property},
Trans. Amer. Math. Soc. {\bf~358} (2006), 3697--3716.
%
\bibitem{Bre89}
F.~Brenti,
\textit{Unimodal, log-concave and P\'olya frequency 
sequences in combinatorics},
Mem. Amer. Math. Soc. {\bf~81} (1989), no. 413, 
pp.~viii+106.
%
\bibitem{Cha25+}
F.~Chapoton,
\textit{On posets and polytopes attached to arbors},
Math. Scand. {\bf~131} (2025), 401--449.
%
\bibitem{FH24}
L.~Ferroni and A.~Higashitani,
\textit{Examples and counterexamples in Ehrhart 
theory},
EMS Surv. Math. Sci. (to appear), 
{\tt arXiv:2307.10852}.
%
\bibitem{Ga05}
S.R.~Gal,
\textit{Real root conjecture fails for five- 
and higher-dimensional spheres},
Discrete Comput. Geom. {\bf~34} (2005), 269--284.
%
\bibitem{HiAC}
T.~Hibi,
Algebraic Combinatorics on Convex Polytopes,
Carslaw Publications, Australia, 1992.
%
\bibitem{LW07}
L.L.~Liu and Y.~Wang, 
\textit{A unified approach to polynomial sequences 
with only real zeros}, 
Adv. in Appl. Math. {\bf~38} (2007), 542--560.
%
\bibitem{Mac04}
P.A.~MacMahon,
Combinatorial analysis, Vol.~II,
Dover Publications, 2004.
%
\bibitem{PS72}
G.~P\'olya and G.~Szego,
Problems and Theorems in Analysis, vol.~2,
Springer--Verlag, 1972.
%
\bibitem{StaEC1}
R.P.~Stanley,
Enumerative Combinatorics, vol.~1,
Cambridge Studies in Advanced Mathematics {\bf~49},
Cambridge University Press, second edition,
Cambridge, 2011.
%
\bibitem{Wag92}
D.G.~Wagner, 
\textit{Total positivity of Hadamard products}, 
J. Math. Anal. Appl. {\bf~163} (1992), 459--483.
%
\bibitem{Ya15}
C.H.~Yan,
\textit{Parking functions},
in \textit{Handbook of Combinatorics} (M.~Bona, ed.), 
CRC Press, 2015, pp.~835--893.
%
\end{thebibliography}
\end{document}